\newtheorem{thm}{Theorem} 
\newtheorem{theorem}{Theorem}[section] 
\newtheorem*{theorem*}{Theorem}
\newtheorem{lemma}[theorem]{Lemma}
\newtheorem*{lemma*}{Lemma}
\newtheorem*{corollary*}{Corollary}
\theoremstyle{definition}
\newtheorem{definition}[theorem]{Definition}
\newtheorem*{remark*}{Remark}
\newtheorem*{remarks*}{Remarks}
\newtheorem*{addenda*}{Addenda}
\newcommand{\Z}{\mathbb Z}
\begin{document}

\title{Counting Regions in Billiard Trajectories}
\author[Dave Auckly]{Dave Auckly$^{1}$}
\address{Department of Mathematics\newline\indent Kansas State University\newline\indent  Manhattan,
Kansas 66506}
\email{dav@math.ksu.edu}
\author[Betsy Giles]{Betsy Giles$^{2}$}
\address{Department of Mathematics\newline\indent Kansas State University\newline\indent  Manhattan,
Kansas 66506}
\email{betsylynn@ksu.edu}
\thanks{$^1$Partially supported by Simons Foundation grant 585139 and NSF grant DMS-1952755.
$^2$Partially supported by an Arts and Sciences Research Scholarship and the K-State I-Center. }
\maketitle

\parskip 6pt

\vskip-.4in
\vskip-.4in

\section{Introduction}

Mathematical billiards is an ideal model of  a billiard game. 
One can trace the path of a ``billiard ball" as it travels on a billiard table. This path records where the ball travels once it is set in motion. In the mathematical model of billiards, the billiard ball is infinitesimally small, frictionless, and all collisions are elastic. When the ball reaches the table edges, it reflects with an angle equal to the angle of incidence. Mathematical billiards can be studied on  tables of any shape. 

In this paper we study paths that start and end in a corner (not necessarily the same corner). Each such path or trajectory produces a geometric pattern of parallelograms. By suitably scaling the length of the rectangle, one may assume that the path starts at an  angle of $45$ degrees to an edge.  This convention defines a natural aspect ratio for the sides of the rectangle for such a path. Continue the motion of the ball as it reflects off each edge at a $45$ degree angle and mark the trajectory. This continues until the ball reaches a corner, ending the ball trajectory. The resulting geometric pattern contains slanted squares and rectangles. The smallest such squares are called \emph{atomic squares}. Larger slanted rectangles composed of atomic squares are called \emph{molecular rectangles}. See Figure~\ref{Def} where the atomic square is green and the molecular rectangle is blue. In Figure~\ref{Def}, there are exactly three atomic squares and five molecular rectangles (three $1 \times 1$, one $2\times 1$ and one $1 \times 2$). In this paper we count the number of atomic squares and molecular rectangles formed by billiard paths as the dimensions of the billiard table changes. 
\begin{figure}[htb]
\centering
\includegraphics[width=2.2in]{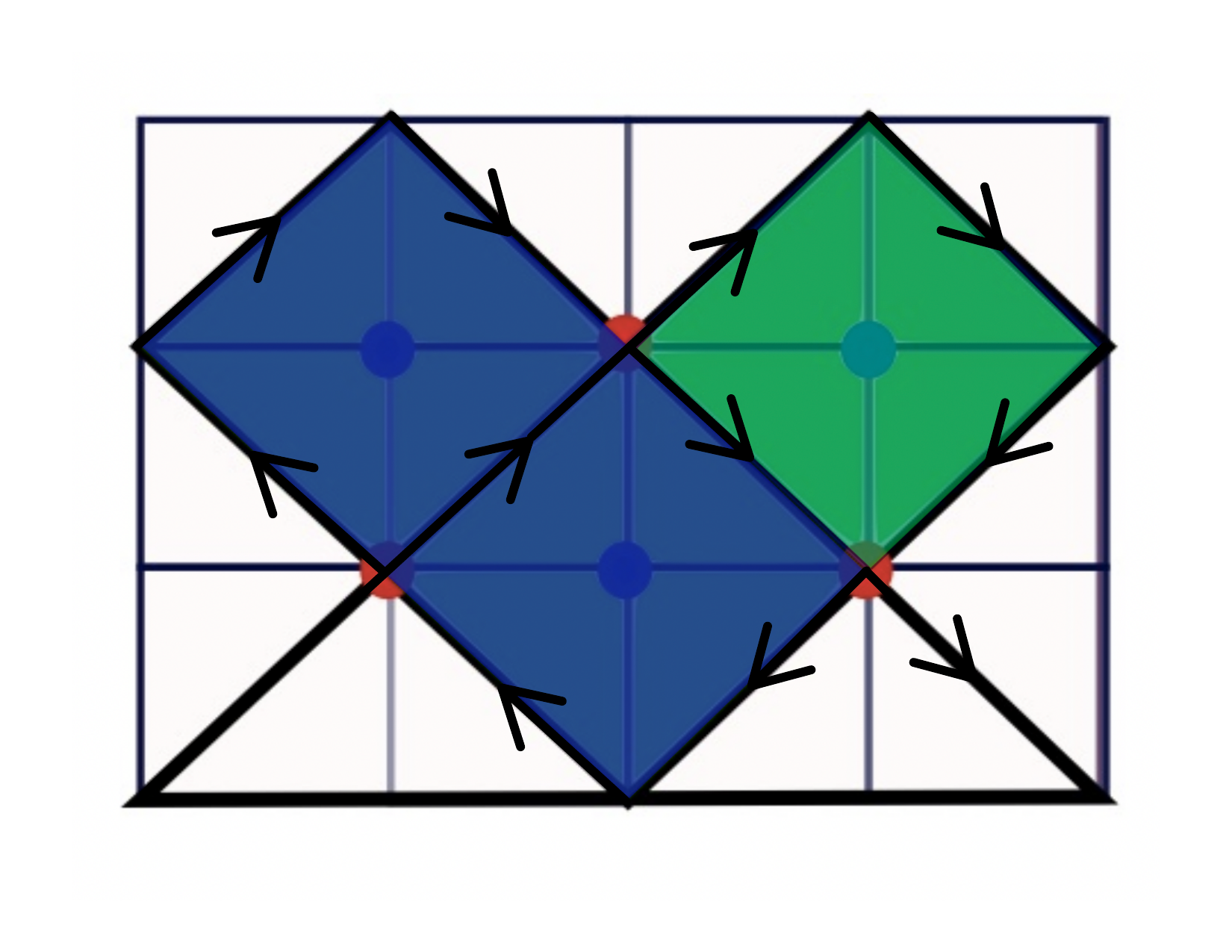}
\caption{Example of atomic squares and molecular rectangles}
\label{Def}
\end{figure}
This work was motivated by the ``On Reflection" activity from Tanton's book \cite{Tanton}. This activity has been used as a recurring favorite in the Navajo Nation Math Circles project. A teacher asked for the number of slanted rectangles in such billiard trajectories during a teacher workshop. The teachers made some progress on the question, and the question was further studied in the undergraduate research project leading to this paper. A description of the Navajo Nation Math Circles and a number of mathematical activities may be found in \cite{AKSS}. Our main results provide a count of the number of atomic squares and molecular rectangles in any rectangular billiard trajectory.

\begin{thm}\label{thmA}
The number of atomic squares in an $a \times b$ rectangle is

\[
\frac{(a-\text{\rm gcd}(a,b))(b-\text{\rm gcd}(a,b))}{2\times\text{\rm gcd}(a,b)^2}
\]

\end{thm}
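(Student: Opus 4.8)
The plan is to place coordinates so that the ball starts at the corner $(0,0)$ of $[0,a]\times[0,b]$ moving in direction $(1,1)$ (any starting corner is equivalent by reflecting the table), and to reduce first to the coprime case. A $45^\circ$ billiard trajectory together with its entire decomposition into slanted squares and rectangles is invariant under rescaling the table, so shrinking by the factor $d=\gcd(a,b)$ carries the pattern in the $a\times b$ table bijectively onto the pattern in the $m\times n$ table, where $m=a/d$ and $n=b/d$ satisfy $\gcd(m,n)=1$. The number of atomic squares is therefore unchanged, so it suffices to prove that an $a\times b$ table with $\gcd(a,b)=1$ has exactly $(a-1)(b-1)/2$ atomic squares; substituting back gives $\frac{(m-1)(n-1)}{2}=\frac{(a-d)(b-d)}{2d^2}$, which is the claimed formula.

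Assume now $\gcd(a,b)=1$. Every trajectory segment lies either on a slope $+1$ line $y-x=c$ or a slope $-1$ line $x+y=k$, and I would first track how the level $c$ or $k$ transforms at each bounce. Because the walls sit at the integer coordinates $x=0,a$ and $y=0,b$, each reflection changes the level by an even integer (for instance, a reflection off the right wall sends $y-x=c$ to $x+y=2a+c$), so, starting from $c=0$, every level that occurs is even. The harder converse—that \emph{every} even level in range is actually traversed—is where coprimality is essential. Unfolding the table by reflections sends the whole trajectory to the single diagonal of the square $[0,ab]^2$; since $\gcd(a,b)=1$ this diagonal meets no interior corner of the $a+b-1$ reflected tiles it crosses, hence folds to exactly $a+b-1$ distinct segments, one per even level. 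The conclusion I am aiming for is that the drawn lines are precisely $y-x=c$ for even $c\in(-a,b)$ together with $x+y=k$ for even $k\in(0,a+b)$, each clipped to the rectangle.

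Granting this, an atomic square is the smallest cell of the arrangement, bounded by two consecutive $+1$ lines and two consecutive $-1$ lines. Consecutive even levels differ by $2$, so such a cell has side $\sqrt2$ and its center is a lattice point $(i,j)$, with bounding lines $y-x=(j-i)\pm1$ and $x+y=(i+j)\pm1$. These four levels are all even exactly when $i+j$ is odd, and the square lies inside the table exactly when $1\le i\le a-1$ and $1\le j\le b-1$ (these inequalities also force the four levels into the admissible ranges identified above). I would then verify that each such configuration really is a single undivided square cell rather than a clipped fragment. This identifies atomic squares bijectively with lattice points $(i,j)\in[1,a-1]\times[1,b-1]$ satisfying $i+j$ odd.

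It remains to count these points. There are $(a-1)(b-1)$ lattice points in the grid, and since $\gcd(a,b)=1$ the integers $a$ and $b$ are not both even, so at least one of $a-1,\,b-1$ is even; a short parity computation then shows that exactly half the points have $i+j$ odd, giving $(a-1)(b-1)/2$ and completing the argument. The main obstacle is the surjectivity claim in the second paragraph—that every even level is genuinely realized by the single closed-up trajectory—since it is precisely here that the hypothesis $\gcd(a,b)=1$ must be used, whereas the parity bound and the final count are elementary.
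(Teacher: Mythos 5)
Your architecture is genuinely different from the paper's and is, in outline, viable: the paper tracks which \emph{grid points} the trajectory visits, modelling the doubled table as the group $\mathbb{Z}_{2p}\times\mathbb{Z}_{2q}$ and the path as the homomorphism $n\mapsto (n,n)$, so that ``the path hits exactly the points with even coordinate sum'' falls out of an injectivity-plus-counting (Chinese remainder) argument before folding; you instead characterize which \emph{lines} the trajectory traverses and read off the atomic squares as the bounded unit cells of that line arrangement. Your reduction to the coprime case, the parity-of-levels computation at reflections, the identification of undivided cells with lattice points $(i,j)$ satisfying $i+j$ odd and $1\le i\le a-1$, $1\le j\le b-1$, and the final parity count are all correct.

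The gap is exactly where you suspected, and your unfolding sentence does not yet close it. From ``the diagonal of $[0,ab]^2$ meets no interior tile corner'' you may conclude that the trajectory genuinely runs corner to corner and consists of exactly $a+b-1$ maximal chords; you may \emph{not} conclude ``one per even level.'' Two things are missing. First, the elementary count that the number of even $c\in(-a,b)$ plus the number of even $k\in(0,a+b)$ equals $a+b-1$ (true in all parity cases allowed by coprimality, but it must be checked). Second, and more seriously, that distinct tile crossings fold to \emph{distinct} chords; without this, the $a+b-1$ segments could occupy fewer than $a+b-1$ levels and the counting argument collapses. This injectivity is a second, separate use of the hypotheses: the crossing of tile $[ia,(i+1)a]\times[jb,(j+1)b]$ folds to the level $ia-jb$, $(j+1)b-(i+1)a$, $(j+1)b-ia$, or $(i+1)a-jb$ according to the parities of $i$ and $j$, and equating the levels of two distinct crossed tiles forces either the tiles to coincide or a relation of the form $(i_1+i_2+1)a=(j_1+j_2+1)b$; coprimality gives $b\mid i_1+i_2+1$ and $a\mid j_1+j_2+1$, the index ranges $0\le i\le b-1$, $0\le j\le a-1$ then force $i_1+i_2+1=b$ and $j_1+j_2+1=a$, and the parities of the indices in the offending cases force $a$ and $b$ both even, contradicting $\gcd(a,b)=1$. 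With that injectivity and the level count in hand, your ``one segment per even level'' follows and the rest of your argument goes through; note that the paper avoids this issue entirely by doing its injectivity argument upstairs in the group $\mathbb{Z}_{2p}\times\mathbb{Z}_{2q}$, where it is a one-line divisibility statement.
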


\begin{thm}\label{thmB}

The number of molecular rectangles in an $a \times b$ rectangle, with $a > b$  is
\[
\frac{1}{24}q(q^2-1)(2p-q) \] 
\vspace{-.2in}
\[
\text{\rm where } p= \frac{a}{\text{\rm gcd}(a,b)} 
\quad \text{\rm and} \quad q= \frac{b}{\text{\rm gcd}(a,b)}.
\]

\end{thm}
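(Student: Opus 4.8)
The plan is to pass to diagonal coordinates and turn the count into a lattice--rectangle problem. After rescaling we may assume $\gcd(a,b)=1$, so the table is the $p\times q$ rectangle $T=\{0\le x\le p,\ 0\le y\le q\}$ with $p>q$ coprime. Set $\sigma=\tfrac12(x-y)$ and $\tau=\tfrac12(x+y)$; then the slope $+1$ segments of the trajectory lie on the lines $\sigma=\text{const}$ and the slope $-1$ segments on $\tau=\text{const}$. The structure underlying Theorem~\ref{thmA} is that the lines actually traversed are exactly the integer lines $\sigma\in\bz$, $\tau\in\bz$ meeting the interior of $T$, each covered once as a full chord, and that the atomic squares are precisely the unit cells of this $(\sigma,\tau)$--lattice lying inside $T$. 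In these coordinates $T$ is the diamond $D=\{0\le \sigma+\tau\le p,\ 0\le \tau-\sigma\le q\}$. A molecular rectangle is a union of atomic squares forming a slanted rectangle, i.e.\ an axis--parallel lattice rectangle $[\sigma_0,\sigma_1]\times[\tau_0,\tau_1]$ (integers, $\sigma_0<\sigma_1$, $\tau_0<\tau_1$) all of whose cells are atomic; its four edges are automatically traversed, being sub--chords of covered lines.

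First I would replace ``every cell atomic'' by containment in $D$. Since the inequalities cutting out $D$ are monotone in $\sigma+\tau$ and in $\tau-\sigma$, the block of cells is entirely atomic exactly when its four corners lie in $T$. Writing $w=\sigma_1-\sigma_0\ge1$, $h=\tau_1-\tau_0\ge1$ and $x_0=\sigma_0+\tau_0$, $y_0=\tau_0-\sigma_0$ for the lower corner, this reads $0\le x_0\le p-w-h$, $w\le y_0\le q-h$, with the lattice condition $x_0\equiv y_0\pmod 2$. Hence the number of molecular rectangles is
\[
S=\sum_{w\ge1}\sum_{h\ge1}N(w,h),\qquad
N(w,h)=\#\{(x_0,y_0)\in\bz^2:\ 0\le x_0\le p-w-h,\ w\le y_0\le q-h,\ x_0\equiv y_0\ (2)\}.
\]
The constraint $w\le y_0\le q-h$ forces $w+h\le q<p$, so the sum is finite; on the running example $p=4,\ q=3$ it gives $N(1,1)=3$, $N(2,1)=N(1,2)=1$, total $5$, matching Figure~\ref{Def}.

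It remains to evaluate $S$, and here the only real work is a parity cancellation. I would group terms by the antidiagonal $d=w+h$, so that for fixed $d$ the $y_0$--interval $[w,q-d+w]$ has constant length $c=q-d+1$ while its position shifts as $w=1,\dots,d-1$. Over a set of $c$ consecutive integers the number of a fixed parity is $c/2$ when $c$ is even, so each $N(w,h)$ deviates from its main value $\tfrac12(p-d+1)c$ only when $c$ is odd; summing this deviation over $w=1,\dots,d-1$ and over $x_0\in[0,p-d]$ collapses to $\tfrac12\,(n_e-n_o)(W_e-W_o)$, where $n_e-n_o$ is the parity surplus of $x_0\in[0,p-d]$ and $W_e-W_o$ that of $w\in[1,d-1]$. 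This surplus product is nonzero only if $d$ and $p-d$ are both even with $c$ odd, i.e.\ only if $p$ and $q$ are both even; coprimality rules this out, so \emph{all} rounding terms cancel and
\[
S=\tfrac12\sum_{d=2}^{q}(d-1)(p-d+1)(q-d+1)=\tfrac12\sum_{k=1}^{q-1}k(p-k)(q-k).
\]
Expanding with the power sums $\sum k=\tfrac{(q-1)q}{2}$, $\sum k^2=\tfrac{(q-1)q(2q-1)}{6}$, $\sum k^3=\tfrac{(q-1)^2q^2}{4}$ and factoring gives $\tfrac12\cdot\tfrac{q(q^2-1)}{12}(2p-q)$, which is the claimed $\tfrac1{24}q(q^2-1)(2p-q)$. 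The main obstacle is precisely the parity step: showing that the floor/ceiling corrections cancel identically, which is exactly where the hypothesis $\gcd(a,b)=1$ enters.
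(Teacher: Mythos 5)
Your proof is correct and takes essentially the same route as the paper's: you count, for each fixed shape, the admissible placements as half of $(p-w-h+1)(q-w-h+1)$ plus a parity correction, show all corrections cancel (your alternating sum over $w=1,\dots,d-1$ within an antidiagonal is exactly the paper's cancellation between the $m\times n$ and $n\times m$ counts, with coprimality killing the residual case), and then evaluate the same cubic sum $\tfrac12\sum_{k=1}^{q-1}k(p-k)(q-k)$ by power-sum formulas. The only real difference is presentational: your diagonal coordinates replace the paper's informal tracking of tilted-rectangle centers with a clean lattice-point count, but the decomposition, the parity-cancellation mechanism, and the final summation are identical.
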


As an example refer to Figure~\ref{Checker} which displays the billiard path in a $p\times q = 5\time 4$ grid. The formula for atomic squares gives $(5-1)(4-1)/2 = 6$ and the formula for molecular rectangles gives $4(4^2-1)(2\cdot 5 - 4)/24 = 15$. These agree with what one obtains by counting the squares and rectangles in the figure.

\section*{Initial reductions and atomic squares}

We begin by reducing the general counting problem to a standard form. Our first observation is that we may assume that the trajectory is at a 45 degree angle to the edge of the region. We place the rectangle with its edges parallel to the coordinate axes of a coordinate system with the lower left corner of the rectangle at the origin. By symmetry we may assume that the trajectory starts at the origin. We call a trajectory trivial if it just follows an edge from the origin to a corner.

\begin{lemma}
Any non-trivial billiard trajectory starting at a corner in a rectangle produces a pattern of parallelograms combinatorially equivalent to the pattern produced by a trajectory making a 45 degree angle with the edge.
\end{lemma}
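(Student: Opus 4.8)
The plan is to show that any non-trivial billiard trajectory can be deformed, by an affine change of coordinates, into one that meets the edges at 45 degrees, and that this affine map carries the parallelogram pattern of the original trajectory to the square/rectangle pattern of the 45-degree trajectory in a way that preserves all incidence relations (which parallelograms share edges, which are contained in which), i.e. produces a combinatorially equivalent pattern. The key point is that "combinatorially equivalent" only records the incidence and containment structure of the cells, not their metric shape, so we are free to apply any linear map that sends the trajectory's direction to the 45-degree direction.

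First I would set up coordinates as described: place the rectangle as $[0,a]\times[0,b]$ with the trajectory starting at the origin. A non-trivial trajectory leaves the origin at some angle $\theta\in(0,\pi/2)$, say with initial direction vector $(\cos\theta,\sin\theta)$; triviality is exactly the degenerate case $\theta\in\{0,\pi/2\}$, which we have excluded. The standard unfolding picture for billiards in a rectangle is to reflect the rectangle across its edges repeatedly, so that the reflecting trajectory becomes a single straight line in the plane; the trajectory is determined by the slope $\tan\theta=(\sin\theta)/(\cos\theta)$ together with the reflected copies of the rectangle. Next I would apply the linear map $T(x,y)=(x,\ y\cdot(\cos\theta/\sin\theta))$, which rescales the vertical direction so that the line of slope $\tan\theta$ is sent to a line of slope $1$. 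Under $T$ the rectangle $[0,a]\times[0,b]$ becomes $[0,a]\times[0,b\cot\theta]$, the family of reflected rectangle-copies is carried to the corresponding family for a rectangle of this new aspect ratio, and the straight unfolded line becomes the unfolded line of the 45-degree trajectory.

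The next step is to check that $T$ carries the pattern of parallelograms to the pattern produced by the 45-degree trajectory. Because $T$ is an invertible affine map, it sends lines to lines, intersections to intersections, and faces to faces; it therefore induces a bijection between the cells (the parallelograms) of the original pattern and the cells of the image pattern, and this bijection preserves the face/edge/vertex incidence poset as well as the containment relation between atomic cells and the larger rectangles built from them. This is exactly the statement that the two patterns are combinatorially equivalent. I would then fold the unfolded picture back up, noting that folding is a piecewise-isometric identification that is compatible with $T$ on each reflected tile, so the equivalence descends to the actual trajectory inside the single rectangle.

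The main obstacle I anticipate is not the linear-algebra core, which is essentially immediate once the unfolding is in place, but rather pinning down the definition of "combinatorially equivalent" precisely enough that the argument is airtight, and verifying that the new aspect ratio $b\cot\theta$ yields a genuine billiard trajectory of the required type (i.e. that the endpoint still lands in a corner). Concretely, for the folded trajectory to start and end in corners one needs the unfolded line to begin and end at lattice points of the reflected grid, and I would need to argue that the original non-trivial corner-to-corner condition forces $\cot\theta$ to be rational, so that after rescaling we land on a rational aspect ratio and the combinatorial pattern is exactly that of a 45-degree trajectory in a suitable $a'\times b'$ rectangle. Handling this rationality/endpoint bookkeeping carefully is where the real work lies; the geometric deformation itself is straightforward.
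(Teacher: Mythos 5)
Your proposal is correct and its core is the same as the paper's: the vertical rescaling $(x,y)\mapsto(x,\,y\cot\theta)$ (the paper writes it as $(x,y)\mapsto(x,\,y/m)$ with $m=\tan\theta$), which takes vertical lines to vertical lines, horizontal lines to horizontal lines, parallelograms to parallelograms, and slopes $\pm m$ to slopes $\pm1$. The one structural difference is that you route the argument through the unfolding picture and then fold back, whereas the paper applies the rescaling directly to the folded rectangle; the direct route works because the vertical rescaling commutes with reflections across horizontal and vertical lines, so it carries billiard trajectories to billiard trajectories with no unfolding needed --- your detour is valid but adds machinery the lemma does not require. One correction: the ``rationality/endpoint bookkeeping'' that you identify as where the real work lies is actually a non-issue for this statement. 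The rescaling is a bijection taking the rectangle to a rectangle and corners to corners, so if the original trajectory starts and ends at corners then so does its image, automatically; and rationality of the new aspect ratio is not something you must verify to justify the rescaling --- it is a consequence the paper extracts \emph{after} the lemma, from the fact that a slope-$\pm1$ trajectory joining corners forces the points where it meets the sides to have rational ratio.
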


\begin{proof}
Let $m$ denote the slope of the original trajectory. Since the trajectory is non-trivial and the rectangle is in the first quadrant we see that $0 < m < \infty$. Now use the transformation $(x,y)\mapsto (x,y/m)$. This is just a vertical change-of-scale which maps vertical lines to vertical lines, horizontal lines to horizontal lines, and parallelograms, to parallelograms. This changes the trajectory of slope $m$ to a trajectory of $1$, and a trajectory of slope $-m$ to a trajectory of slope $-1$.

\begin{figure}[htb]
\centering
\includegraphics[width=2.2in]{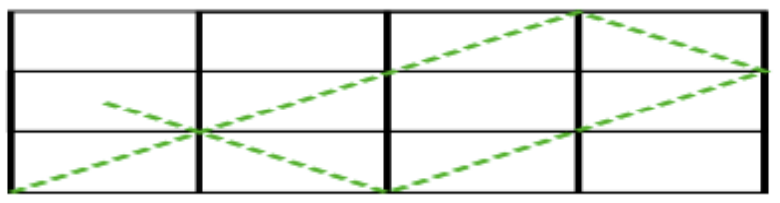} \hskip.3in
\includegraphics[width=2.2in]{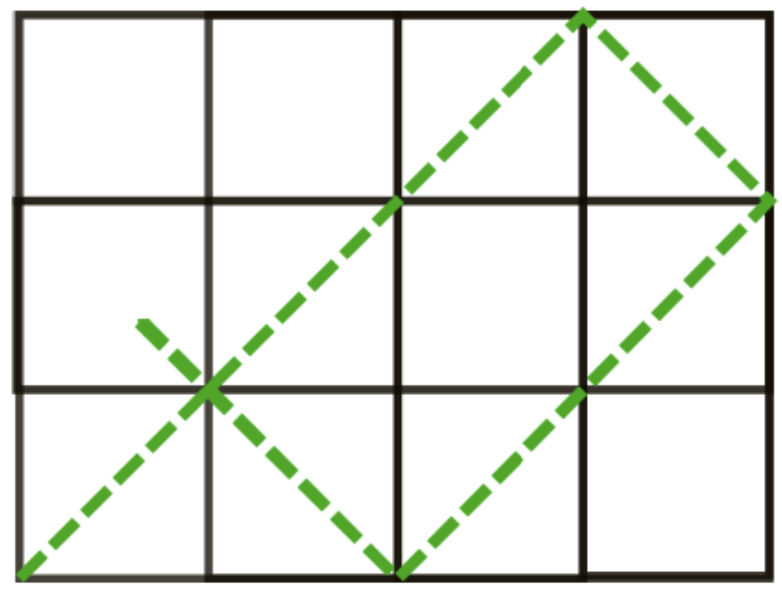}
\caption{Rescaling a billiard table.}
\label{Ratio}\label{ratio}
\end{figure}

\end{proof}

Denote the edge lengths of the normalized rectangle $a$ and $b$. A non-trivial rectangle will have $a > 0$ and $b > 0$. Since the normalized trajectory has a slope of $1$, the $x$ and $y$ coordinates of the first point where it hits the boundary must be equal. In particular, they form a rational ratio. As later slopes are always $\pm 1$, all other points where the trajectory meets the side must have a rational ratio. 
We now wish to reduce to the case of a $p\times q$ rectangle with $\text{gcd}(p,q) = 1$.  
The path of the billiard ball simply depends on the ratio of the edge lengths to each other. Indeed, while grid lines help track the pattern, their presence does not determine the path of the billiard ball.

Given an $a\times b$ rectangular table with rational side length ratio, we can find $p$ and $q$ with the $\text{gcd}(p,q)=1$ such that $a=pd$ and $b=qd$. Without changing any part of the billiard ball path, we can subdivide the edges into segments of length $d$. This subdivision does not change the path, it just adds auxillary lines to make it clear that the combinatorial pattern only depends upon the ratio of the side lengths. See Figure~\ref{ratio}.

\begin{figure}[htb]
\centering
\includegraphics[width=2.2in]{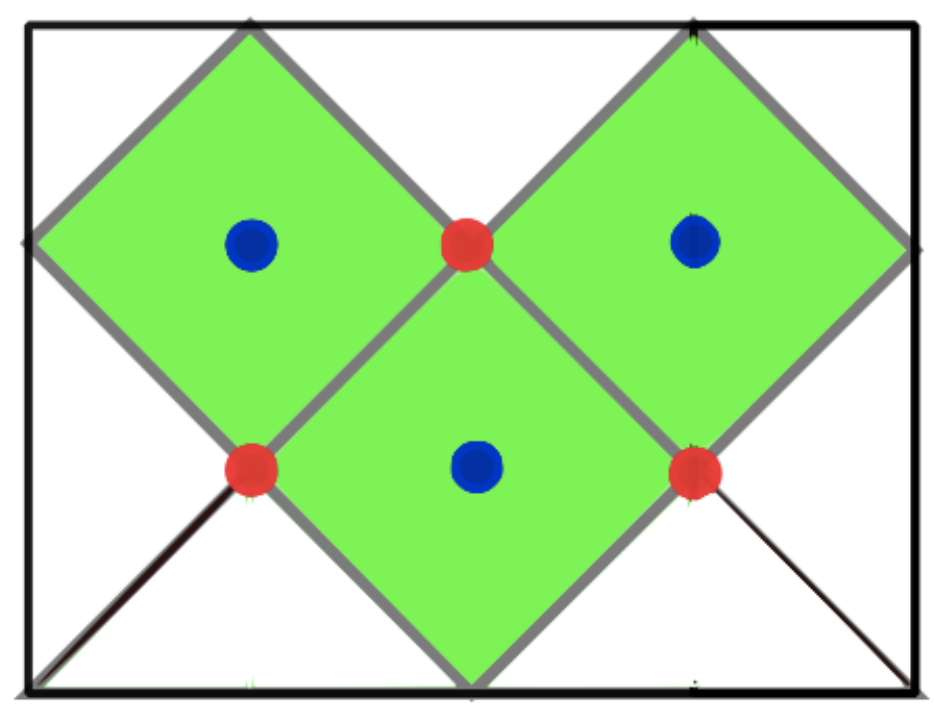} \hskip.3in
\includegraphics[width=2.2in]{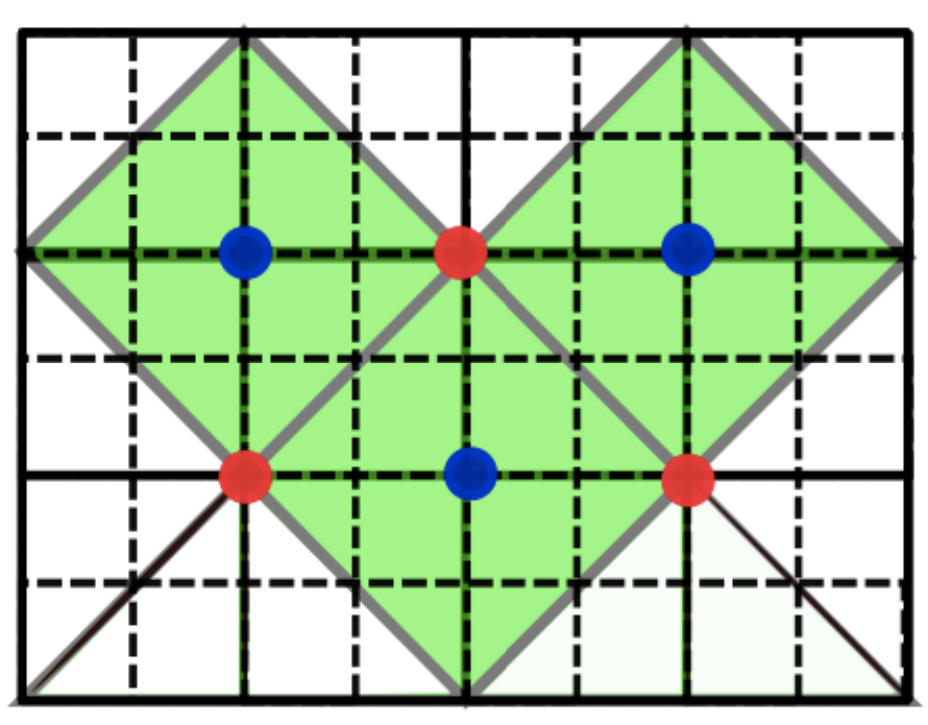}
\caption{Example of a 4:3 side ratio resulting in three atomic squares.}
\label{Ratio}\label{ratio}
\end{figure}

By looking at the path of the billiard ball, one notices an apparent relationship between the grid points and the  atomic squares. It is clear that no atomic square can be centered on any of the grid points along the exterior of the grid. Thus only the interior grid points are of interest. If the length of the pool table is $p$ elementary segments, each segment has a unique right end point that belongs to that segment. The right end point of the final segment is an exterior edge. Since the exterior points are being excluded one concludes that there are $(q-1)$ interior right endpoints along the length. See Figure~\ref{Subtract}. Similarly, there are $(p-1)$ interior grid points along the width. It follows that a $p\times q$ rectangle has $(p-1)(q-1)$ interior grid points.  With this background we can now prove  Theorem~\ref{thmA}.

\begin{figure}[htb]
\centering
\includegraphics[width=1.3in]{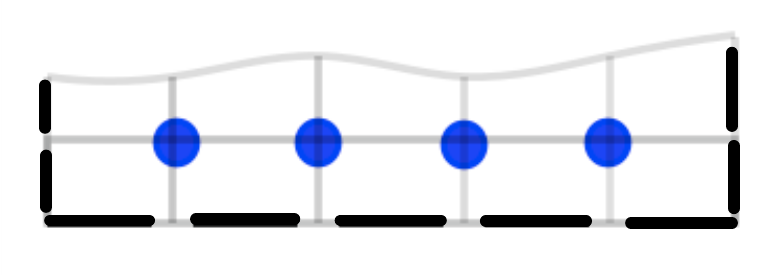}
\caption{An edge of $5$ segments with $4$ interior grid points.}
\label{Subtract}
\end{figure}

\noindent{\it Proof of Theorem A -- Outline.}
The interior grid points that the path doesn't pass through are exactly the centers of the atomic squares. There is an intuitive explanation this and for  why the billiard ball path passes through exactly half of the grid points and doesn't pass through the other half. We provide this explanation here and then provide a more detailed analysis of this fact in the next section. When looking at the grid, once the exterior points have been eliminated from examination, an interior grid remains. Looking at the interior grid, each grid point can be colored in a checkerboard pattern, with one color corresponding to grid points that the line passes through and the other color corresponding to the grid points that form the centers of the atomic squares (Figure~\ref{Checker}). Since  $\text{gcd}(p,q)=1$ at least one of $p$ or $q$ must be odd. Thus one of $(p-1)$ or $(q-1)$ must be even. The checkerboard coloring will alternate in that direction ending evenly, so exactly one-half of the interior grid points along lines in that direction will be one color, and the other half will be the other color. In Figure~\ref{Checker} there are four interior grid points on each line parallel to the side of length $5$. With alternate colors exactly two out of four in each row correspond to atomic squares. It follows that the number of atomic squares, which we will see is the same as the number of interior grid points that the path misses is exactly one half of the number of interior grid points, so
\[
\text{Number of Atomic Squares} = \frac{(p-1)(q-1)}{2}.
\]
To obtain the formula in the statement of the theorem, substitute $p = a/\text{gcd}(a,b)$, $q = q/\text{gcd}(a,b)$ and multiply the numerator and denominator of the large fraction by $\text{gcd}(a,b)^2$.

\begin{figure}[htb]
\centering
\includegraphics[width=2.5in]{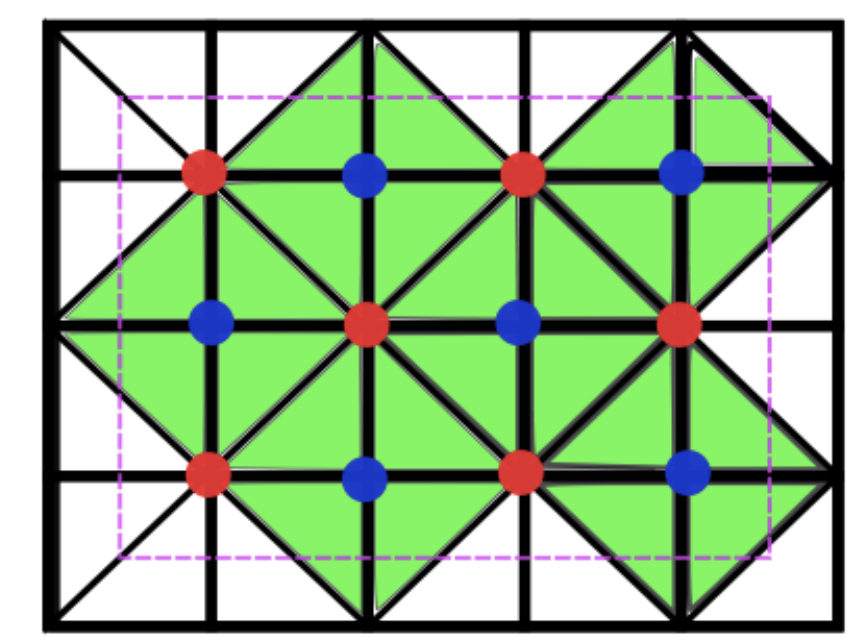}
\caption{Example of 5 x 4 table showing checkerboard pattern with six atomic squares.}
\label{Checker}
\end{figure}

\section*{Grid Points on the Trajectory -- Extra details for the Proof of Theorem A}

To understand why the billiard ball path passes through exactly half of the grid points, we will first analyze something we call asteroid paths. The name for these paths come from a video game that was popular in the early 1980s. In this game, asteroids moved in straight lines in a rectangular region. When an asteroid reached an edge of the rectangle it instantly transported to the corresponding point on the opposite edge and continued traveling in the same direction. See Figure~\ref{astf}.

\begin{figure}[htb]
\centering
\includegraphics[width=2.5in]{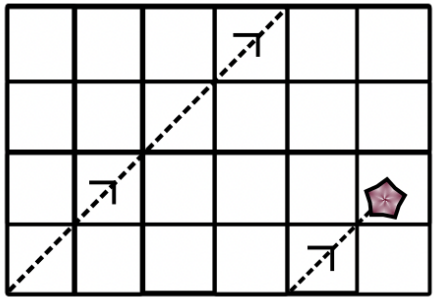}
\caption{An asteroid path.}
\label{astf}
\end{figure}

Just as one can normalize non-trivial billiard paths to have slope $\pm 1$, one can normalize non-trivial asteroid paths to have slope $1$. (In this case the direction of the path is always constant, so we do not need to consider positive and negative slopes. It is this feature that makes asteroid paths easier to analyze. At any rate we only need to study such paths, so from here forward we assume that all asteroid paths have slope $1$.  Asteroid paths are easier to describe mathematically, but still have the property that they pass through exactly one half of the grid points in suitable rectangular regions. The basic observation that allows one to describe asteroid paths is that when such a path is at an interior grid point with coordinates $(x,y)$ and the path has a slope of one, the next point the path will reach is $(x+1,y+1)$. To model the transportation from one edge to the opposite edge, we use cyclic groups. Let  ${\mathbb{Z}_d}$ represent the normalized remainders when one divides by $d$. Thus these are the numbers $\{0,1,\cdots,d-1\}$. When we add two such numbers we can always put the answer in this range by removing extra copies of $d$, i.e., divide by $d$ and take the remainder. For example in $\Z_6$ one has $4+3 = 7 = 7 - 6 = 1$. Here we say $4+3$ is equal to $1$ modulo $6$ and more properly write $4+3 \equiv 1 \quad( \text{mod}\  6)$. We can now give a formal definition of an asteroid path.

\begin{definition}\label{astd}
The \emph{asteroid path} on a $2p \times 2q$ grid with the $\text{gcd}(p, q)=1$ is the path, \hfill\newline
$\alpha(n):  \mathbb{Z}_{2pq} \rightarrow {\mathbb{Z}_{2p}}\times {\mathbb{Z}_{2q}}$,  described by the following function.
\[
\alpha(n) = (n,n) \ (\text{mod}\ (2p,2q)).
\]
\end{definition}
 
This model of the  $2p \times 2q$ asteroid rectangle is an example of a group, and the asteroid path is an example of a group homomorphism. It is this additional structure that makes the asteroid paths easier to analyze.
Label each grid point with its coordinates. It is apparent that the path only goes through points that have an ordered pair that sum to an even number and thus are equivalent to zero modulo 2. 
This is because the path mapped always has a slope of one, so as one coordinate increases by one,  the other will as well. Subtracting $2p$ or $2q$ upon meeting an edge will not change the parity. This is the reason we need to assume that both sides of the rectangle are an even number of segments. If this is not the case and one side has odd length, the path can pass through points with odd coordinate sum. (See Figure~\ref{non-ast}.) Just because the path only passes through points with even coordinate sum, does not mean the path will pass through every such point. Indeed, for a $4\times 4$ rectangle, the path misses several of the points with even coordinate sum. (See Figure~\ref{non-ast}.) Notice that by our definition the point $(x,2q)$ is the same as the point 
$(x,0)$. We mark both points in our figure when a path runs through either. When counting points, it makes sense to just count the points with coordinates $(x,y)$ satisfying $0 \le x < 2p$ and $0 \le y < 2q$.

\begin{figure}[htb]
\centering
\includegraphics[width=2.5in]{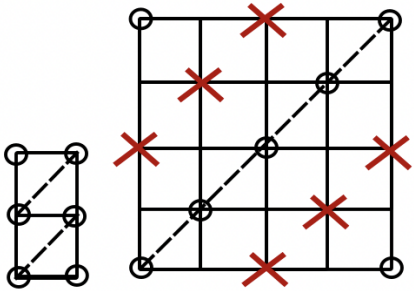}
\caption{The $1\times 2$ and $4\times 4$ asteroid paths.}
\label{non-ast}
\end{figure}

We now show that asteroid paths in suitable rectangles pass through exactly half of the grid points.
\begin{lemma} \label{evaft} An asteroid path in a  $2p \times 2q$ rectangle with $\text{gcd}(p,q) = 1$ passes through exactly one half of the grid points. In particular, exactly the grid points $(x,y)$ with $x+y \equiv 0 \ (\text{mod}\  2)$.
\end{lemma}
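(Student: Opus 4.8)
The plan is to treat $\alpha$ as a group homomorphism $\alpha \colon \Z_{2pq} \to \Z_{2p} \times \Z_{2q}$ and to identify its image precisely. Two things must be established: first, that every point in the image has even coordinate sum, and second, that the image is large enough --- in fact has exactly $2pq$ elements --- to exhaust all such points. Since the total number of grid points is $2p \cdot 2q = 4pq$, hitting exactly the $2pq$ even-sum points is the same as passing through exactly half of the grid points.

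For the containment, I would argue by parity. Because $2p$ and $2q$ are even, reducing $n$ modulo $2p$ or modulo $2q$ does not change its parity; hence both coordinates of $\alpha(n) = (n \bmod 2p,\, n \bmod 2q)$ are congruent to $n$ mod $2$. Their sum is therefore congruent to $2n \equiv 0 \pmod 2$, so every point on the asteroid path lies in the even-sum set.

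The heart of the argument is injectivity of $\alpha$. If $\alpha(m) = \alpha(n)$ then $m \equiv n$ simultaneously modulo $2p$ and modulo $2q$, so $m - n$ is divisible by $\operatorname{lcm}(2p, 2q)$. The key computation --- and the one genuine subtlety --- is that because $\gcd(p,q) = 1$ we have $\gcd(2p, 2q) = 2$, and hence $\operatorname{lcm}(2p,2q) = (2p)(2q)/2 = 2pq$. Thus $m \equiv n \pmod{2pq}$, i.e.\ $m = n$ in $\Z_{2pq}$, so $\alpha$ is injective and its image has exactly $2pq$ elements.

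Finally I would count the even-sum set directly: for each of the $2p$ choices of $x$, exactly $q$ of the $2q$ values of $y$ make $x + y$ even, giving $2pq$ such points in all. An injective image of $2pq$ points contained in a set of $2pq$ points must equal that set, which completes the proof. The step I expect to require the most care is the modulus computation $\operatorname{lcm}(2p,2q) = 2pq$: it is precisely the failure of $2p$ and $2q$ to be coprime (they always share the factor $2$) that forces the image to be half of the product group rather than all of it, so this is exactly where the ``one half'' in the statement comes from.
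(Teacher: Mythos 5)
Your proposal is correct and follows essentially the same route as the paper: establish that $\alpha$ lands in the even-sum set by a parity argument, prove injectivity using $\gcd(p,q)=1$, and conclude surjectivity onto the $2pq$ even-sum points by the cardinality count. The only cosmetic difference is that the paper proves injectivity via the kernel (showing $\alpha(n)=(0,0)$ forces $n\equiv 0 \pmod{2pq}$ by a prime-divisor argument) whereas you phrase the same fact as the computation $\operatorname{lcm}(2p,2q)=2pq$; these are interchangeable.
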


\begin{proof}
Let $\Gamma := \{(x,y)\in {\mathbb{Z}_{2p}}\times {\mathbb{Z}_{2q}}\,|\, x+y \equiv 0 \ (\text{mod}\  2)\}$. We have seen that the map $\alpha$ takes values in $\Gamma$. Notice that the number of elements in $\Gamma$ is $\frac12(2p)(2q) = 2pq$. This is the same as the number of elements in $\Z_{2pq}$. Thus, if we show that $\alpha$ is injective, we will be able to conclude that it is surjective. Now $\alpha(n) = (0,0)$ implies that $n \equiv 0 \ (\text{mod}\  2p)$ and $n \equiv 0 \ (\text{mod}\  2q)$. Thus $n$ contains a factor of $2$ and all of the prime divisors of $p$ counted with multiplicity and all of the prime divisors of $q$ counted with multiplicity. (This is where we need to assume that $\text{gcd}(p,q) = 1$). If follows that
$n \equiv 0 \ (\text{mod}\  2pq)$. This will imply that $\alpha$ is injective. Indeed, if $\alpha(n)=\alpha(m)$, then $\alpha(n-m)=0$, so $n-m \equiv 0 \ (\text{mod}\  2pq)$  and $n\equiv m  \ (\text{mod}\  2pq)$ establishing that $\alpha$ is injective and we are done.
\end{proof}

We now show how the analysis of asteroid paths can be applied to billiard trajectories. The basic idea is to fold the $2p\times 2q$ asteroid rectangle in half in the vertical direction and then fold it in half in the horizontal direction to obtain the billiard path on the $p\times q$ rectangle. See Figure~\ref{fold}.

\begin{figure}[htb]
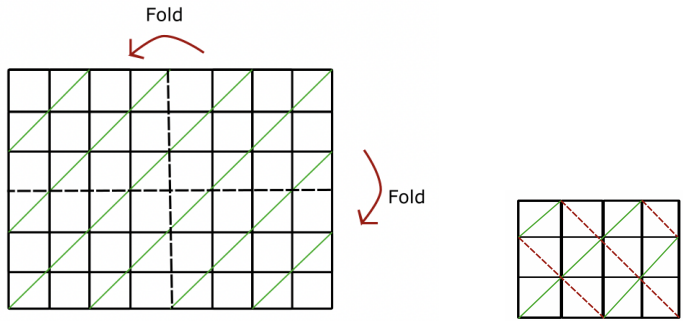

\centering
\includegraphics[width=2.34in]{Fold1.pdf} \hskip.3in
\includegraphics[width=.99in]{Folded.pdf}
\caption{Folding a grid to convert astroids into billiards.}
\label{fold}
\end{figure}

To describe this folded model for the pool table, we examine the map from the asteroid rectangle to the pool table. 
This map must send all points to points $(x',y')$ such that $0 \leq x' \leq p$ and $0 \leq y' \leq q$. Any point that satisfies this condition is mapped to itself. For the other points we use the following cases:

Case 1: when the $y$-coordinate is outside of the range, but the $x$-coordinate is in the range:
\[
(x, y) \rightarrow (x, 2q-y)
\]

Case 2: when the $x$-coordinate is outside of the range, but the $y$-coordinate is in the range:
\[
(x,y) \rightarrow (2p-x, y)
\]

Case 3: when both coordinates are out of the  range:
\[
(x,y) \rightarrow (2p-x, 2q-y)
\]

An alternate, and more efficient description is to represent the cyclic groups by $\Z_d/\sim$ where we define two integers to be equivalent if they differ by a multiple of $d$: $x\sim x'$ exactly when $x-x'\in d\Z$. We now define an equivalence relation on $\Z_{2p}\times\Z_{2q}$ by
\[
(x,y) \approx (x',y') \ \text{if and only if}\ (x',y') \equiv (\pm x, \pm y)  \ (\text{mod}\  (2p,2q). 
\]
This time the set of equivalence classes is not a group. However, the set of equivalence classes $\Z_{2p}\times\Z_{2q}/\approx$ is a model of the billiard table. To model the billiard trajectory we use $\Z_{2pq}/\simeq$ where $n\simeq n'$ if and only if $n\equiv \pm n'\ (\text{mod}\ 2pq)$ and the billiard trajectory is just the map
\[
\alpha': \Z_{2pq}/\simeq\ \to \Z_{2p}\times\Z_{2q}/\approx, 
\]
Given by $\alpha'([n])=[(n,n)]$.

We showed in Lemma~\ref{evaft} that the astroid path moved through a grid point $(x,y)$ exactly when $x-y \equiv 0 \ (\text{mod}\ 2)$. This condition is preserved under the relation $\approx$ and is satisfied by exactly half of the points. To add a bit more here note that se can divide $\Z_{2p}\times\Z_{2q}/\approx$ into the set of equivalence classes $[(x,y)]$ with $x-y \equiv 0 \ (\text{mod}\ 2)$ and the  set of equivalence classes $[(x,y)]$ with $x-y \equiv 1 \ (\text{mod}\ 2)$. We can get from the even collectionn to the odd collection by  $[(x,y)]\mapsto [(x+1,y)]$ and from the odd on to the even one by $[(x,y)]\mapsto [(x-1,y)]$ (one checks that these respect the equivlence relation). Since the trajectoy hits all of the points with $x-y \equiv 0 \ (\text{mod}\ 2)$, it hits exactly half. This discussion establishes
\begin{lemma}
A billiard path passes through exactly half of the grid points.
\end{lemma}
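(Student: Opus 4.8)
The plan is to transport the asteroid result of Lemma~\ref{evaft} across the folding quotient and then to count the two parity classes on the billiard table directly. Everything rests on two structural observations about the relation $\approx$. First, because $\approx$ acts on the two coordinates independently (a point is equivalent to $(\pm x,\pm y)$), the quotient splits as a product, $\Z_{2p}\times\Z_{2q}/{\approx}\ \cong (\Z_{2p}/{\pm})\times(\Z_{2q}/{\pm})$; folding each factor onto an interval produces the distinguished representatives $0\le x\le p$ and $0\le y\le q$, so the billiard table has exactly $(p+1)(q+1)$ grid points. Second, the parity $x+y \bmod 2$ is constant on each $\approx$-class: replacing $x$ by $-x$ or by $x+2p$ (and likewise for $y$) changes neither parity, so $x+y \bmod 2$ is a genuine invariant of a class.

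With these in hand I would argue as follows. Since $\alpha'([n])=[(n,n)]$ is the composite of the asteroid map $\alpha(n)=(n,n)$ with the quotient projection, the image of the billiard path is the projection of $\image\alpha$. By Lemma~\ref{evaft}, $\image\alpha=\Gamma$ is exactly the set of even-parity points, and since parity is a class invariant, the projected image is exactly the set $E$ of classes with $x+y\equiv 0\ (\mathrm{mod}\ 2)$. Writing $O$ for the classes with $x+y\equiv 1$, the assertion that the path hits half the grid points is precisely the claim $|E|=|O|$.

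The final step is the count $|E|=|O|$, and this is where the hypothesis $\gcd(p,q)=1$ does its work. Using the folded representatives,
\[
|E|-|O|=\sum_{x=0}^{p}\sum_{y=0}^{q}(-1)^{x+y}=\Big(\sum_{x=0}^{p}(-1)^x\Big)\Big(\sum_{y=0}^{q}(-1)^y\Big),
\]
and each inner sum equals $1$ when its upper index is even and $0$ when it is odd. As $\gcd(p,q)=1$ forbids $p$ and $q$ from both being even, at least one factor vanishes, giving $|E|=|O|=\tfrac12(p+1)(q+1)$; the same computation with the ranges $1\le x\le p-1$, $1\le y\le q-1$ yields the even split of the $(p-1)(q-1)$ \emph{interior} points needed for Theorem~\ref{thmA}. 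The one point that needs care, and the main obstacle, is that the naive shift $[(x,y)]\mapsto[(x+1,y)]$ does \emph{not} descend to the quotient: near the folded boundary $x\in\{0,p\}$ it fails to respect $\approx$, since $(x,y)\approx(-x,y)$ while $(x+1,y)\not\approx(-x+1,y)$ in general. The clean remedy is to build the bijection on the folded representatives instead — when $p$ is odd, pair them as $\{0,1\},\{2,3\},\dots,\{p-1,p\}$ and send $[(x,y)]$ to $[(\sigma(x),y)]$, where $\sigma$ swaps the two entries of each pair, a parity-reversing bijection $E\to O$ — or simply to bypass the bijection with the alternating sum above. Either route makes transparent that the even split is forced exactly by one of $p,q$ being odd.
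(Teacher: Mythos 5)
Your proposal is correct, and at the structural level it follows the same strategy as the paper: transport Lemma~\ref{evaft} across the folding quotient, note that the parity of $x+y$ is an invariant of $\approx$-classes, and identify the image of the billiard path with the set $E$ of even classes. Where you diverge is the final counting step, and the divergence matters, because the paper's own version of that step is flawed in exactly the way you predicted. The paper finishes by asserting that $[(x,y)]\mapsto[(x+1,y)]$ and $[(x,y)]\mapsto[(x-1,y)]$ are mutually inverse maps between the even and odd classes, claiming that ``one checks that these respect the equivalence relation.'' That check in fact fails: with $p=3$, for instance, $[(1,y)]=[(5,y)]$ since $5\equiv -1\ (\text{mod}\ 6)$, yet the shifted points give $[(2,y)]\neq[(6,y)]=[(0,y)]$, so the shift does not descend to the quotient; and if one instead applies it to folded representatives, it fails to be injective near the columns $x=0$ and $x=p$. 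Your two substitutes are both sound: the alternating-sum identity
\[
|E|-|O|=\Bigl(\sum_{x=0}^{p}(-1)^x\Bigr)\Bigl(\sum_{y=0}^{q}(-1)^y\Bigr),
\]
which vanishes because $\gcd(p,q)=1$ prevents $p$ and $q$ from both being even, and the pairing $\{0,1\},\{2,3\},\dots,\{p-1,p\}$ of folded representatives in an odd direction, which is a genuine parity-reversing bijection precisely because it is built on canonical representatives rather than pushed through the quotient. Both make explicit where the hypothesis $\gcd(p,q)=1$ enters, which the paper's argument obscures. Your closing remark that the same computation over $1\le x\le p-1$, $1\le y\le q-1$ gives the even split of the $(p-1)(q-1)$ interior points is also correct and is what Theorem~\ref{thmA} actually requires; the paper instead derives the interior statement afterwards by a separate boundary-cycling argument. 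In short: same skeleton, but your counting step is a repair of a defective step in the paper's proof, not merely an alternative to a sound one.
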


What is more relevant is the fact the billiard path passes through exactly half of the \emph{interior} grid points. to see this note that the billiard path passes through exactly half of the bounary grid points. To see this cycle the boundary grid points one step in the counter-clockwise direction. (This means $(x,0)\mapsto (x+1,0)$ for $x<p$, $(p,y)\mapsto (p,y+1)$ for $y<q$, $(x,q)\mapsto (x-1,q)$ for $x>0$, and $(0,y)\mapsto (0,y-1)$ for $y>0$.) This map has an inverse given by a $1$-step clockwise cycle. The map also takes even points to odd points and odd points to even points. Thus restricting the map to the even boundary grid points gives a bijection between the  even boundary grid points and the odd boundary grid points. This means the billiard path passes through exactly half of the boundary grid points. Since it passes through exactly half of the grid points, it also passes through exactly half of the interior grid points (namely the even ones). Since there are $(p-1)(q-1)$ interior grid points, there must be $(p-1)(q-1)/2$ odd interior grid points. The is exactly the number of atomic squares. We have not given a precise definition of an atomic square. One cheap way to do so would be to define an atomic square as an odd interior grid point $(x,y)$.
Indeed, give such a point one gets a square contained within the grid having corners $(x-1,y)$, $(x,y-1)$, $(x+1,y)$, and $(x,y+1)$. As each of these are even grid points, the billiard path passes through them. One can even see that there is a segment of the  billiard path with each slope $+1$ and $-1$, so the edges of this square are contained in the billiard path. Conversely, a square with corners $(x-1,y)$, $(x,y-1)$, $(x+1,y)$, and $(x,y+1)$ in the billiard path must have each corner even so that $(x,y)$ is an odd interior grid point. This completes the proof of Theorem A. \hfill $\square$

\section*{Molecular Rectangles}

To briefly review, larger slanted rectangles composed of atomic squares are called \emph{molecular rectangles}. We begin by counting each $m\times n$ molecular rectangle within the $p\times q$ rectangular billiards table. 

\begin{definition}
\emph{Molecular Rectangles} in a $p\times q$ grid with the gcd($p,q$) = 1 are defined as having dimensions  $m\times n$ where $m$ is the dimension of the side with the negative slope and $n$ is the dimension of the side with the positive slope.

\end{definition}

We are able to count the number of $m\times n$ \emph{molecular rectangles} for each possible $m$ and $n$ and then use summations to discover a formula that counts all \emph{molecular rectangles} in the described grid. 

\begin{theorem}
The number of \emph{molecular rectangles} with dimensions $m\times n$ within the $p\times q$ grid with the gcd($p,q$) = 1 is 

\[
\text{Number of Molecular Rectangles} = \frac{1}{2}(p-m-n+1)(q-m-n+1)+\frac12\delta_{p\equiv_2q, \ m\not\equiv_2n}.
\]

where $\delta_{p\equiv_2q, \ m\not\equiv_2n}$ is equal to $1$ if both subscripts are true and $0$ otherwise.

\end{theorem}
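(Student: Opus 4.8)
The plan is to set up an explicit bijection between the $m\times n$ molecular rectangles in the $p\times q$ table and the lattice positions of a distinguished ``corner'' atomic square, and then to count those positions as lattice points in a rectangle subject to a single parity (checkerboard) constraint.

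First I would fix notation for a molecular rectangle by its corner cells. Recall from the proof of Theorem~A that the atomic squares are exactly the odd interior grid points, and that the four edges of each such square lie on the billiard path. Moving one atomic square in the positive-slope direction sends a center $(x,y)$ to $(x+1,y+1)$, and one step in the negative-slope direction sends it to $(x+1,y-1)$; so an $m\times n$ block is the set of centers $(x_0+i+j,\;y_0-i+j)$ with $0\le i\le m-1$ and $0\le j\le n-1$, anchored at the center $(x_0,y_0)$ of one corner cell. A short computation gives the four outer corners of the block as $(x_0-1,y_0)$, $(x_0+m-1,y_0-m)$, $(x_0+m+n-1,y_0-m+n)$, and $(x_0+n-1,y_0+n)$, and confirms that opposite sides have slopes $+1$ and $-1$ of the prescribed lengths $n$ and $m$. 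Since every cell of the block is an atomic square, its entire boundary is automatically a union of atomic-square edges and hence lies on the path; thus an $m\times n$ molecular rectangle occurs precisely when the block fits inside $[0,p]\times[0,q]$.

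Next I would translate ``fits inside'' into constraints on the anchor $(x_0,y_0)$. Reading off the extreme $x$- and $y$-coordinates of the four corners, the block lies in the table exactly when $1\le x_0\le p-m-n+1$ and $m\le y_0\le q-n$, together with the requirement that $(x_0,y_0)$ be an atomic-square center, i.e. $x_0+y_0\equiv 1\ (\mathrm{mod}\ 2)$ (interiority is then automatic). Hence the number of $m\times n$ molecular rectangles equals the number of integer points in the rectangle $[1,\,p-m-n+1]\times[m,\,q-n]$ whose coordinate sum is odd. Writing the two intervals as containing $A=p-m-n+1$ and $B=q-m-n+1$ integers respectively, and letting $A_e,A_o$ and $B_e,B_o$ be their counts of even and odd entries, the desired total is $A_eB_o+A_oB_e=\tfrac12 AB-\tfrac12(A_e-A_o)(B_e-B_o)$, whose leading piece is exactly $\tfrac12(p-m-n+1)(q-m-n+1)$.

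The main obstacle, and the source of the correction $\tfrac12\delta_{p\equiv_2 q,\ m\not\equiv_2 n}$, is the parity bookkeeping of $(A_e-A_o)(B_e-B_o)$. One observes that $A_e-A_o$ is nonzero only when $p-m-n$ is even and $B_e-B_o$ is nonzero only when $q-m-n$ is even, so the correction can survive only when $p-m-n$ and $q-m-n$ share a parity; since $\gcd(p,q)=1$ forces at least one of $p,q$ to be odd, this is controlled precisely by whether $p\equiv q\ (\mathrm{mod}\ 2)$ and how $m+n$ interacts with that parity, and the magnitude is then $\tfrac12$. The delicate point is that the two intervals are anchored at different offsets (the $x_0$-range starts at $1$ while the $y_0$-range starts at $m$), so the sign of the correction is sensitive to the parity of $m$. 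Organizing this case analysis so that the endpoint offsets yield exactly the stated indicator $\delta_{p\equiv_2 q,\ m\not\equiv_2 n}$ is the crux; I would pin down the bookkeeping against the worked $5\times4$ and $5\times3$ tables (and cross-check the grand total against Theorem~B) before committing to the general sign.
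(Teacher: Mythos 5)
Your proposal follows essentially the same route as the paper's proof: enumerate the possible placements of an $m\times n$ block, obtaining $(p-m-n+1)(q-m-n+1)$ of them (the paper's $p_{m,n}q_{m,n}$, which it gets by induction on $m$ and $n$ rather than by your explicit corner coordinates), and then cut this count roughly in half by a checkerboard parity condition on the placements. Your anchor-and-interval bookkeeping is a more explicit rendering of the paper's blue/red dot argument, and your corner computations (lowest corner of the leftmost placement at $(m,0)$, etc.) match the paper's exactly.

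The one step you deferred --- the sign of the correction --- is precisely where you should trust your own computation rather than the stated formula. Carrying out your count: $x_0$ ranges over $[1,\,p-m-n+1]$, an interval beginning at an odd integer, so when $A$ is odd you get $A_e-A_o=-1$; $y_0$ ranges over $[m,\,q-n]$, so when $B$ is odd you get $B_e-B_o=+1$ for $m$ even and $-1$ for $m$ odd. Hence in the surviving case ($p\equiv_2 q$, necessarily both odd since $\gcd(p,q)=1$, and $m\not\equiv_2 n$) the correction is $+\tfrac12$ when $m$ is even and $-\tfrac12$ when $m$ is odd. This contradicts the theorem as stated, which puts $+\tfrac12$ in both cases --- but the error is in the statement, not in your argument. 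A concrete check: for $p=5$, $q=3$, $m=1$, $n=2$ there is exactly one such rectangle (cells centered at $(2,1)$ and $(3,2)$), while the stated formula gives $\tfrac12\cdot 3\cdot 1+\tfrac12=2$. The paper's own proof in fact establishes the signed version: its final paragraph shows the lower-left placement is compatible with the trajectory exactly when $m$ is even, which yields $\tfrac12(p_{m,n}q_{m,n}+1)$ for $m$ even and $\tfrac12(p_{m,n}q_{m,n}-1)$ for $m$ odd. Moreover, the proof of Theorem B requires the signed version, since it begins by asserting that the correction terms for $m\times n$ and $n\times m$ ``cancel,'' which is only possible if they carry opposite signs. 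So complete your computation, record the result with the sign determined by the parity of $m$, and note that summing your corrected formula over all $(m,n)$ still recovers Theorem B --- your approach is sound and, once finished, agrees with what the paper's proof actually proves.
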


\begin{proof}
Similar to the proof of Theorem~\ref{thmA}, we track the centers of the $m \times n$ rectangles. First consider where the center of such a rectangle could lie without regard to the billiard trajectory. Let $p_{m,n}$ represent the number of distinct horizontal placements of an $m \times n$ tilted rectangle in a $p \times q$ grid. Similarly let $q_{m,n}$ represent the number of vertical placements. The number of tilted rectangles is $p_{m,n} \times q_{m,n}$. In the proof of Theorem~\ref{thmA} we established that $p_{1,1} = p-1$ and $q_{1,1} = q-1$. Now increasing $m$ by one will slide the center of the lower left tilted rectangle to the right one-half of a grid unit, while shifting the center of the lower right rectangle to the left one-half of a grid unit. The centers of the tilted rectangles remain one grid unit apart because it is always possible to translate a tilted rectangle one grid unit provided it remains within the grid. This means increasing $m$ by one will decrease the number of horizontal tilted rectangle placements by one, i.e., $p_{m+1,n} = p_{m,n}-1$. Similarly for vertical placements, $q_{m+1,n} = q_{m,n}-1$ and increasing $n$ by one has the similar effect, $p_{m,n+1} = p_{m,n}-1$ and $q_{m,n+1} = q_{m,n}-1$. Using induction one can establish the following formulas.
\[
p_{m,n} = p-m-n+1, \quad\quad q_{m,n} = q-m-n+1.
\]

\begin{figure}[htb]
\centering
\includegraphics[width=3.6in]{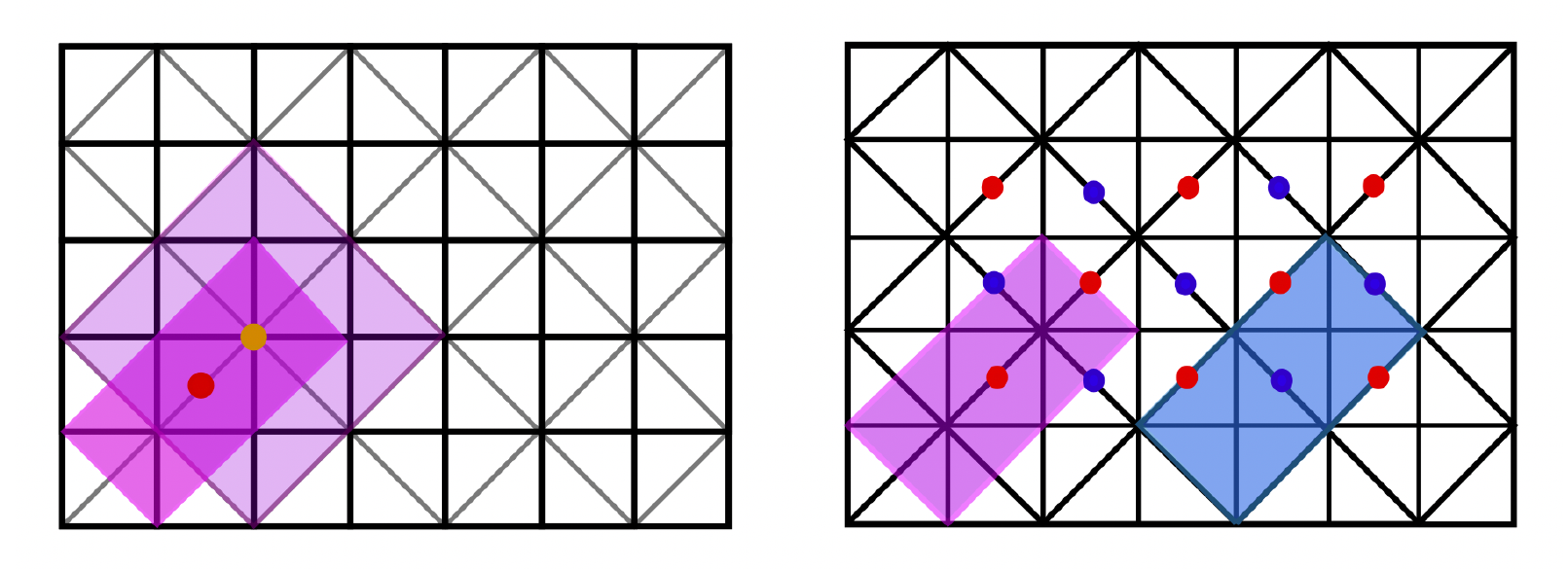}
\caption{Rectangle centers, and centers compatible with the trajectory.}
\label{MR}
\end{figure}

As in the case of atomic squares, only about half of the potential grid placements will be compatible with the billiard trajectory. The borders of the billiard trajectory appear every second grid unit moving either horizontally or vertically thus the centers of tilted rectangles may be checkerboard colored to indicate which centers are compatible with the billiard trajectory. Similar to the  proof of Theorem A, more detail could be added regarding boundary points and relating the odd interior points to the molecular rectangles. We refrain to streamline the presentation. Color these centers blue if the corresponding tilted rectangle is compatible with the billiard path, and red if it is not. All but possibly the lowest row will pair off giving the same number of red and blue dots in those upper rows. Similarly all but possibly the first dot in the lowest row will pair off. Now if both $p_{m,n}$ and $q_{m,n}$ are odd, and the lower left dot is blue then there will be $\frac12(p_{m,n}q_{m,n} +1)$ blue dots and molecular rectangles. If both $p_{m,n}$ and $q_{m,n}$ are odd, and the lower left dot is red then there will be $\frac12(p_{m,n}q_{m,n} -1)$ red dots and molecular rectangles. If one of $p_{m,n}$ or $q_{m,n}$ is even, then there will be an equal number of blue and red dots and the number of molecular rectangles will be $\frac12p_{m,n}q_{m,n}$.

If $p\not\equiv_2q$ then $p_{m,n} =p-m-n+1$ and $q_{m,n} = q-m-n+1$ will not have the same parody. Then the number of molecular rectangles will be $\frac12p_{m,n}q_{m,n}$. If $p\equiv_2q$ then $p\equiv_2q\equiv_21$ as $p$ and $q$ are relatively prime. If $p\equiv_2q\equiv_21$ and $m\equiv_2n$ then $p_{m,n} =p-m-n+1$ and $q_{m,n} = q-m-n+1$ will both be even and the number of molecular rectangles will be $\frac12p_{m,n}q_{m,n}$. However, if $p\equiv_2q\equiv_21$ and $m\not\equiv_2n$, then $p_{m,n} =p-m-n+1$ and $q_{m,n} = q-m-n+1$ will both be odd. 

The lower corner of the lower left tilted $m \times n$ rectangle is at the point $(m,0)$. In Lemma~\ref{evaft}, we saw that the billiard trajectory passes through the point $(x,y)$ exactly when $x+y\equiv_2 0$ thus the lower left tilted rectangle is compatible with the trajectory exactly when $m$ is even. This gives the result. 

\end{proof}

To establish the total number of molecular rectangles we will use the following summation formulas: 
\[
\sum_{r=1}^q1=q, \quad\quad \sum_{r=1}^qr=\frac12q(q+1), \quad \quad
 \sum_{r=1}^qr^2=\frac16q(q+1)(2q+1), \quad\quad \sum_{r=1}^qr^3=\frac14q^2(q+1)^2.
\]

\begin{proof}[Proof of Theorem B] 
The total number of molecular rectangles will be the sum of all of the $m \times n$ rectangles. If we add the number of $m \times n$ rectangles to the number of $n \times m$ rectangles, the terms $\frac12\delta_{p\equiv_2q, \ m\not\equiv_2n}$ in the $m \times n$ case and $\frac12\delta_{p\equiv_2q, \ m\not\equiv_2n}$ in the $n \times m$ case will cancel. The y coordinate of the highest point in the rectangle is $m+n$. So $m+n \le q < p$. Thus 

\[
\begin{aligned} 
& \text{Number of Molecular Rectangles} = \sum_{\substack{1\le m, 1\le n \\ m+n\le q}} \frac12(p-m-n+1)(q-m-n+1) \\ 
& \qquad = \sum_{r=2}^q \sum_{n=1}^{r-1} \frac12(p-r+1)(q-r+1) \\
& \qquad = \sum_{r=1}^q \frac12(r-1)(p-r+1)(q-r+1) \\
& \qquad = \frac12 \sum_{r=1}^q r^3-(p+q+3)r^2+(pq+2p+2q+3)r-(pq+p+q+1) \\
& \qquad = \frac12 \left[\frac14(q+1)^2q^2-\frac16q(q+1)(2q+1)(p+q+3)+\frac12q(pq+2p+2q+q)(q+1)-q(pq+p+q+1)\right] \\
& \qquad = \frac12q(q+1) \left[\frac14q(q+1)-\frac16(2q+1)(p+q+3)+\frac12(pq+2p+2q+3)-(p+1)\right] \\
& \qquad = \frac12q(q+1)(\frac14q^2+\frac14q-\frac13pq-\frac13q^2-q-\frac16p-\frac16q-\frac12+\frac12pq+p+q+\frac32-p-1) \\
& \qquad = \frac12q(q+1)(-\frac{1}{12}q^2+\frac16pq+\frac{1}{12}q-\frac16p) \\
& \qquad = \frac{1}{24}q(q+1)(q^2+2pq+q-2p) \\
& \qquad = \frac{1}{24}q(q+1)(-1)(q^2-(2p-1)q+2p) \\
& \qquad = \frac{1}{24}q(q+1)(-1)\left[(q-1)(q-2p)\right] \\
& \qquad = \frac{1}{24}q(q^2-1)(2p-q) 
\end{aligned}
\]

\end{proof}


The ``On Reflection" activity from Tanton's book \cite{Tanton} was originally designed for high school students. Even though we used a little bit of group theory in the analysis, this work should be understandable to a large audience, including high school students. One of the advantages of the ``On Reflection" activity was that it led to a number of interesting questions. Two of these questions were explored in our research project. The simple form for the expression for the number of molecular rectangles makes us wonder if there is a more direct explanation for this expression. 

There is a large body of research related to mathematical billiards. One fundamental question that has been studied since antiquity is the prediction of the orbits of the planets, and the existence of periodic orbits. Poincare\' and Birkhoff did fundamental research on this question forming the foundation of mathematical billiards. Discussion of this history and connections to advanced mathematical descriptions of mechanics may be found in Arnold's classic book \cite{arn}. A shorter expository paper about mathematical papers is the one by Katok, \cite{Kat}. Two of the many books on the subject are by Rozikov and Tabachnikov,\cite{Roz,Tab}.


\bigskip

\section*{Biographies}

\medskip\noindent{\bf David Auckly} has directed undergraduate research projects for nearly forty years. He enjoys sharing the beauty of mathematics with people. He is co-founder and co-director of the Navajo Nation Math Circles project. Aside from mathematics, he enjoys outside pursuits. 

\medskip\noindent{\bf Betsy Giles} is 2021 graduate from Kansas State University, with a B.S. in 
Biochemistry. She found that understanding mathematics provides a deeper 
understanding of the world around us and spent time during her undergraduate 
studying mathematical billiards with Dr. Auckly. She is currently in Israel pursing 
her M.D. with an emphasis in Global Health at Ben Gurion University.


\begin{thebibliography}{99}

\bibitem{arn} V. Arnold, \textit{Mathematical methods of classical mechanics}, GTM vol. 60 Springer-Verlag, New York, NY. 1989.

\bibitem{AKSS} D. Auckly, B. Klein, S. Serenevy, and T. Shubin, \textit{Inspiring Mathematics: Lessons from the Navajo Nation Math Circles}, AMS-MSRI Math Circles Library, vol. 24 American Mathematical Society, Providence, RI. 2019.

\bibitem{Kat} A. Katok, "Billiard table as a playground for a mathematician," in \textit{Surveys in modern mathematics}, London Math. Soc. Lecture Note Ser. vol. 321 (2005)  216--242. 

\bibitem{Roz} U. Rozikov, \textit{An introduction to mathematical billiards}, World Scientific, Hackensack, NJ. 2019. 

\bibitem{Tab} S. Tabachnikov, \textit{Geometry and billiards},Student Mathematical Library, vol. 30 American Mathematical Society, Providence, RI. 2005.

\bibitem{Tanton} J. Tanton, \textit{Mathematics Galore! The first five years of the St. Marks Institute of Mathematics}, Classroom Resource Materials, Mathematical Association of America, Washington, DC. 2012.


\end{thebibliography}
\end{document}